\newtheorem{theorem}{Theorem}[section]
\newtheorem{corollary}[theorem]{Corollary}
\newtheorem{lemma}[theorem]{Lemma}
\theoremstyle{definition}
\newtheorem{remark}[theorem]{Remark}
\title{Two-term large-time asymptotic expansion of the value function for dissipative nonlinear optimal control problems} 
\author{
Veljko A\v{s}kovi\'{c}\footnote{MBDA, France (\texttt{veljkoaskovic@hotmail.com}).}
\and
Emmanuel Tr\'elat\footnote{Sorbonne Universit\'e, CNRS, Universit\'e Paris Cit\'e, Inria, Laboratoire Jacques-Louis Lions (LJLL), F-75005 Paris, France (\texttt{emmanuel.trelat@sorbonne-universite.fr}).}
\and
Hasnaa Zidani\footnote{Insa Rouen Normandie, LMI, Saint-Étienne-du-Rouvray, France (\texttt{hasnaa.zidani@insa-rouen.fr}).}
}
\date{}
\begin{document}
\maketitle

\begin{abstract}
Considering a general nonlinear dissipative finite dimensional optimal control problem in fixed time horizon $T$, we establish a two-term asymptotic expansion of the value function as $T\rightarrow+\infty$. The dominating term is $T$ times the optimal value obtained from the optimal static problem within the classical turnpike theory. The second term, of order unity, is interpreted as the sum of two values associated with  optimal stabilization problems related to the turnpike.
\end{abstract}

\section{Introduction}
The long-term asymptotic properties of the value function have been extensively explored from the perspective of partial differential equations (PDE), primarily within the framework of Hamilton-Jacobi-Bellman (HJB) equations (\cite{1}, \cite{2}, \cite{3}) and ergodic theory (\cite{4}). For instance, in \cite[Chapter VII]{1}, the authors investigate the optimal control problem with discounted Lagrange cost over an infinite time horizon. To characterize the ergodic behavior of the value function, the authors take the limit as the discount factor tends to zero and identify the limit value function as the viscosity solution of the limiting equation. In \cite{2}, assuming suitable conditions, including periodicity assumptions on the Hamiltonian, the authors characterize the large-time behavior of the solution to the first-order HJB equation as the solution of a stationary equation. The extension of these results to deterministic zero-sum differential games with two conflicting controllers has been studied, as seen in \cite{3}. It is worth noting that more general results are available, such as those presented in \cite{5}, where the authors demonstrate that, under appropriate assumptions, there exists at most one potential accumulation point (in the uniform convergence topology) of the values. This occurs when the time horizon of Cesaro means converges to infinity or the discount factor of Abel means converges to zero.

\medskip

From the classical optimal control perspective, the large-time behavior of the value function is typically inferred as a consequence of a property satisfied by a broad class of optimal control problems when the time horizon is sufficiently large. This property is known as the \emph{turnpike property}, indicating that, for certain optimal control problems with a sufficiently  large time horizon, any optimal solution tends to remain close to the optimal solution of an associated \emph{static optimization} problem for the majority of the time. This optimal static solution is referred to as the turnpike (the term originates from the concept that a turnpike represents the fastest route between two distant points, even if it is not the most direct route; see Figure~\ref{fig:tnpke_illustration}).
\begin{figure}[h]
\begin{center}
\includegraphics[width=1.0\linewidth]{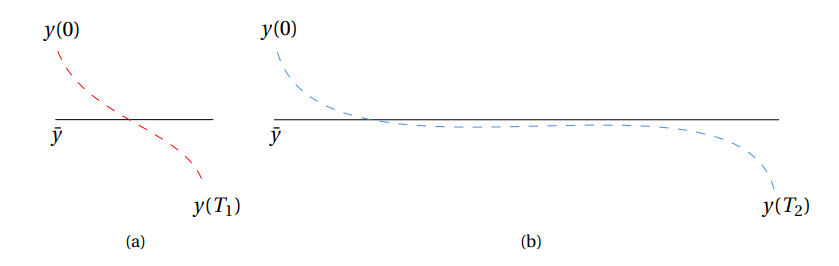}
\caption{Turnpike illustration: (a) small time case (b) large time case}
\label{fig:tnpke_illustration}	
\end{center}
\end{figure}

The turnpike phenomenon was initially observed  and investigated by economists for discrete-time optimal control problems (see \cite{6}, \cite{7}).  Various notions of turnpike properties exist, with some being stronger than others (see \cite{8}). Exponential turnpike properties have been established in \cite{9}, \cite{10}, \cite{11}, \cite{12} and \cite{13} for the optimal triple resulting of the application of Pontryagin's Maximum Principle (PMP), ensuring that the extremal solution (state, adjoint and control) remains exponentially close to an optimal solution of the corresponding static controlled problem, except at the beginning and at the end of the time interval, when the time horizon $T$ is sufficiently large. As unravelled in \cite{12} this phenomenon is closely related to hyperbolicity properties of the Hamiltonian flow. For discrete-time problems it has been shown for instance in \cite{14}, \cite{15} that the exponential turnpike property is also closely related to a strict dissipativity property. Measure-turnpike is a weaker notion of turnpike, meaning that any optimal solution, along the time frame, remains close to an optimal static solution except during a subset of times with small Lebesgue measure. It has been proved in \cite{16}, \cite{17} that measure turnpike follows from strict dissipativity or from strong duality properties.

\medskip

Based on the turnpike property, an equivalent as $T\rightarrow+\infty$ of the value function has been derived in \cite{11} or \cite{12}, (see Figure \ref{fig:tnpke}):
\begin{equation}
\frac{1}{T}.v(T,x,z) \underset{T \to +\infty}{\sim}  \bar{v} \text{ if } \bar{v} \neq 0
\label{intro1}
\end{equation}
where $v(T,x,z)$ is the optimal cost to steer the system from $x$ to $z$ in time $T$ and $\bar{v}$ is the ``steady" cost at the turnpike. 

\begin{figure}[h]
\begin{center}
\includegraphics[width=1.0\linewidth]{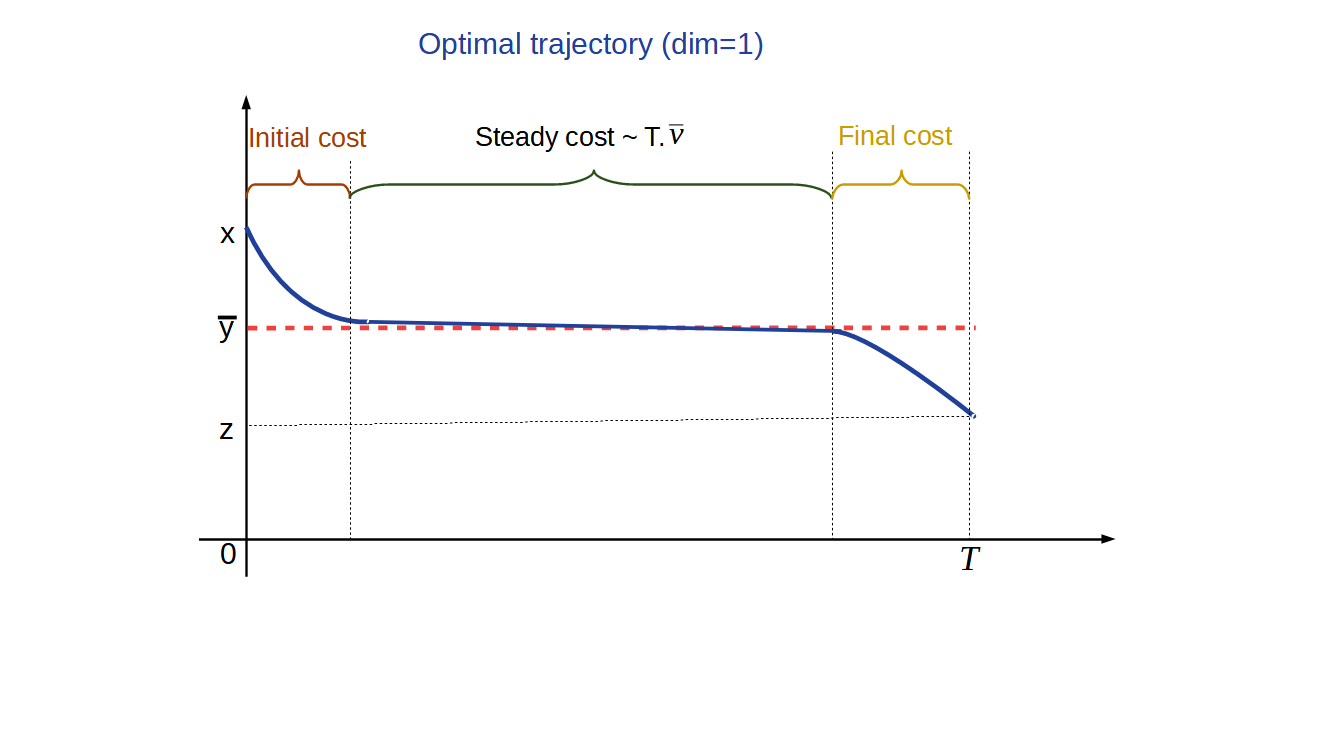}
\caption{Turnpike phenomenon}
\label{fig:tnpke}
\end{center}	
\end{figure}

Significant insights have been achieved in \cite{18} and \cite{19}, where the authors explore variations of the linear quadratic (LQ) problem. Assuming certain controllability conditions, they derive the large-time expansion of the value function at the order of one in $1/T$. In \cite{18}, the various aspects are identified, and some are interpreted within the framework of the Hamilton-Jacobi-Bellman (HJB) theory. Meanwhile, in \cite{19}, all the terms are characterized as functions of the solutions of the Riccati algebraic equation, initial/final states, and the Lagrange multiplier of the static optimization problem.

\medskip
In this manuscript, we extend the results of \cite{18,19} to the class of \emph{dissipative} nonlinear systems. The concept of (strict) dissipativity, as introduced in \cite{21}, is defined in a broad context, accompanied by related notions like the available storage function and the supply rate function. In cases where a system exhibits dissipativity with a specified supply rate function, the inquiry into identifying a suitable storage function has been extensively investigated (refer to, e.g., \cite{32}). This inquiry bears resemblance to the task of determining an appropriate Lyapunov function in the Lyapunov second method, which ensures the stability of a system. A precise mathematical definition will be provided later in this paper.
%%
%%%%%%%%%%%
\section{Setting of the problem}
%%%%%%%%%%%%
%%
We consider the finite-dimensional optimal control problem, consisting of minimizing the cost functional
\begin{equation}
J_{T,x,z}(u)=\int_{0}^{T}f^{0}(y(t),u(t))dt
\label{costgen0}
\end{equation}
over the time interval $[0, T]$, $T>0$ being fixed under the constraints
\begin{subequations} \label{optcongen0}
\begin{align}
&\dot{y}(t)=f(y(t),u(t)), \label{optcongen01}\\
&y(0)=x, \quad y(T)=z , \label{optcongen02}
\end{align}
\end{subequations}
where $f:\mathbb{R}^{n} \times \mathbb{R}^{p} \longrightarrow \mathbb{R}^{n}$ and $f^{0}:\mathbb{R}^{n} \times \mathbb{R}^{p} \longrightarrow \mathbb{R}$ are of class $C^{1}$. The associated value function defined by
\begin{equation}
v(T,x,z):=\underset{u(\cdot) \in \mathcal{U}_{T}^{\Omega}}{\min }\hspace{0.05cm} J_{T,x,z}(u)
\label{valuegen0}
\end{equation}
where  $u \in \mathcal{U}_{T}^{\Omega}:=L^{\infty}\left([0, T], \Omega\right)$, $\Omega$ being a fixed compact subset of $\mathbb{R}^{p}$. In the sequel, we assume that $\Omega \subset \overline{B}(0,c)$ for some $c>0$.

\begin{remark} \label{remomega}
Since $\Omega$ does not depend  on $T$, the space of controls $\mathcal{U}^{\Omega}$ is independent of $T$. This is because, beyond the time horizon $T$, the controls can be trivially extended within $\Omega$ and consequently,  
the space of controls will be denoted by $\mathcal{U}^{\Omega}$.
\end{remark}

We assume that for $T$ large enough,  \eqref{costgen0}-\eqref{optcongen0}-\eqref{valuegen0} admits an  optimal solution, denoted by $(\widehat{y}_{T}(\cdot),\widehat{u}_{T}(\cdot))$. The conditions ensuring the existence of such a solution are well known (see for instance \cite{20}, \cite{37}). For instance, if the set of velocities $\{f(y,u) \hspace{0.05cm} | \hspace{0.05cm} u \in \Omega \}$ is a convex subset of $\mathbb{R}^{n}$ for any $y \in \mathbb{R}^{n}$, with mild growth at infinity and if the epigraph of $f^{0}$ is convex, then there exists at least one optimal solution. These conditions are for example satisfied if the dynamics are control-affine and if the cost functional is convex with respect to $u$

\medskip
By the Pontryagin maximum principle (\cite{20}-\cite{35}), there exist $\lambda^{0} \leqslant 0$ and an absolutely continuous mapping $\widehat{\lambda}_{T}:[0, T] \longrightarrow \mathbb{R}^{n}$ (called adjoint vector) satisfying $(\widehat{\lambda}_{T}(\cdot),\lambda^{0}) \neq (0,0)$ such that:
\begin{equation}
\begin{aligned}
&\dot{\widehat{y}}_{T}(t)=\dfrac{\partial H}{\partial \lambda}\left(\widehat{y}_{T}(t),\widehat{\lambda}_{T}(t),\lambda^{0}, \widehat{u}_{T}(t)\right)  \\
&\dot{\widehat{\lambda}}_{T}(t)=-\dfrac{\partial H}{\partial y}\left(\widehat{y}_{T}(t),\widehat{\lambda}_{T}(t),\lambda^{0}, \widehat{u}_{T}(t)\right)  \\
&\dfrac{\partial H}{\partial u}\left(\widehat{y}_{T}(t),\widehat{\lambda}_{T}(t),\lambda^{0}, \widehat{u}_{T}(t)\right)=0
\end{aligned}
\label{PMPnonlin}
\end{equation}
for almost every $t \in [0, T]$, where the Hamiltonian $H$ is defined by
\begin{equation} 
H(y, \lambda,\lambda^{0},u):=\langle \lambda , f(y,u) \rangle+\lambda^{0} f^{0}(y,u)
\label{Hamnonlin}
\end{equation}
and $\langle\ ,\rangle$ is the Euclidean scalar product in $\mathbb{R}^{n}$. We assume throughout that the abnormal case does not occur, and we set $\lambda^{0}=-1$. Moreover, we assume the existence and uniqueness of the solution (denoted by  $(\bar{y}, \bar{u})$) to the \emph{static optimization problem}
\begin{equation}
\bar{v}:=\underset{f(y,u)=0}{\min}\hspace{0.1cm}f^{0}(y,u).
\label{statnonlingen0}
\end{equation}
This is a usual nonlinear constrained optimization problem settled in $\mathbb{R}^{n} \times \mathbb{R}^{p}$. Note that the minimizer exists and is unique in the linear quadratic case.

\medskip
By the Karush Kuhn Tucker (KKT) optimality conditions, assuming that the abnormal case does not occur, there exists $\bar{\lambda} \in \mathbb{R}^{n}$ such that
\begin{equation}
\begin{aligned} 
&f(\bar{y},\bar{u})=0 \\
-&\dfrac{\partial f^{0}}{\partial y}(\bar{y},\bar{u})+\langle \bar{\lambda}, \dfrac{\partial f}{\partial y}(\bar{y},\bar{u}) \rangle=0 \\
-&\dfrac{\partial f^{0}}{\partial u}(\bar{y},\bar{u})+\langle \bar{\lambda}, \dfrac{\partial f}{\partial u}(\bar{y},\bar{u}) \rangle=0 
\end{aligned}
\label{KKTnonlin}
\end{equation}
In the Hamiltonian formalism, the first-order optimality system \eqref{KKTnonlin} is equivalent to
\begin{equation}
\begin{aligned} 
&\dfrac{\partial H}{\partial \lambda}(\bar{y},\bar{\lambda},-1, \bar{u})=0 \\
-&\dfrac{\partial H}{\partial y}(\bar{y},\bar{\lambda},-1, \bar{u})=0  \\
&\dfrac{\partial H}{\partial u}(\bar{y},\bar{\lambda},-1, \bar{u})=0
\end{aligned}
\label{KKTnonlinstat}
\end{equation}
\noindent Because of the turnpike property (see further), since it is expected that the dominating term in the asymptotic expansion of $v(\cdot)$ is $T.\bar{v}$, we "absorb" it by  subtracting it to the cost $J_{T,x,z}$ and consider the "shifted" optimal control problem defined as
\begin{subequations} \label{P0T}
\begin{align} 
&\underset{u(\cdot) \in \mathcal{U}^{\Omega} }{\min} \hspace{0.1cm} C_{T}(u,x,z):=\underset{u(\cdot) \in \mathcal{U}^{\Omega} }{\min}\hspace{0.1cm}\int_{0}^{T} w(y(t),u(t)) \, dt \label{P0T1}\\
\left(\mathcal{P}_{[0, T]}^{x,z}\right)\qquad &\dot{y}(t)=f(y(t),u(t)), \hspace{0.2cm}\forall t \in [0, T] \label{P0T2} \\
&y(0)=x, \hspace{0.1cm} y(T)=z \label{P0T3}
\end{align}
\end{subequations}
where $w(\cdot)$ is the "shifted cost" defined by\\
\begin{equation}
w(y,u):= f^{0}(y,u)-f^{0}(\bar{y},\bar{u}).
\label{shiftedcost}
\end{equation}
\noindent The readers acquainted with the notion of dissipativity in nonlinear optimal control will recognize in \eqref{shiftedcost} a classical storage function. This is where the link with dissipativity appears.

\medskip
We introduce also the shifted infinite-time optimal control problems 
\begin{equation}
\begin{aligned}
(\mathcal{P}_{\infty f}^{x}) \qquad &v_{f}(x):=\underset{u(\cdot) \in \mathcal{U}^{\Omega}}{\min}\hspace{0.1cm} \int_{0}^{+\infty} w(y(t),u(t)) \, dt \\
&\dot{y}(t)=f(y(t),u(t)), \hspace{0.2cm} y(0)=x
\end{aligned}
 \label{Pfinfnonlin}
\end{equation}
and  
\begin{equation}
\begin{aligned}
(\mathcal{P}_{\infty b}^{z}) \qquad  &v_{b}(z):=\underset{u(\cdot) \in \mathcal{U}^{\Omega}}{\min}\hspace{0.1cm} \int_{0}^{+\infty} w(y(t),u(t)) \, dt \\
&\dot{y}(t)=-f(y(t),u(t)), \hspace{0.2cm} y(0)=z
\end{aligned}
 \label{Pbinfnonlin}
\end{equation}
In our notations the index ``f'' stands for ``forward'' while the index ``b'' stands for ``backward''.
We assume the existence of optimal solutions to $(\mathcal{P}_{\infty f}^{x})$ and $(\mathcal{P}_{\infty b}^{z})$ with a finite cost.

\begin{remark} \label{fwdbwdtime}
The function $v_{f}(\cdot)$ (resp. $v_{b}(\cdot)$) is the optimal cost of the infinite-time optimal control problem consisting in steering the system from $x$ (resp. $z$) forward (resp. backward) in time and minimizing the cost functional $\int_{0}^{+\infty} w(y(t),u(t)) \, dt$. 
\end{remark}

\section{Assumptions} \hspace{10cm}
\vspace{0.2cm}

\noindent \textit{\textbf{Assumptions of global nature}}:
\vspace{0.2cm}

\noindent $(A_{1})$: (\textit{Regularity of $f$ and $f^{0}$}): We assume that $f$ and $f^{0}$ are of class $C^{1}$.
\vspace{0.2cm}

\noindent $(A_{2})$: (\textit{Existence and uniqueness of optimal solutions}): There exists $T_{0}>0$ such that for any $T \geq T_{0}$ each of the optimal control problems  $\left(\mathcal{P}_{[0, T]}^{x,z}\right)$,  $(\mathcal{P}_{\infty f}^{x})$ and  $(\mathcal{P}_{\infty b}^{z})$  admits a unique optimal solution denoted respectively by $\left(\widehat{y}_{T}(\cdot),\widehat{u}_{T}(\cdot)\right)$, $\left(\widehat{y}_{\infty f}(\cdot),\widehat{u}_{\infty f}(\cdot)\right)$ and $\left(\widehat{y}_{\infty b}(\cdot),\widehat{u}_{\infty b}(\cdot)\right)$.
\vspace{0.2cm}

\noindent $(A_{3})$: (\textit{Boundedness of the optimal trajectories}): The optimal trajectories of $\left(\mathcal{P}_{[0, T]}^{x,z}\right)$, $(\mathcal{P}_{\infty f}^{x})$ and $(\mathcal{P}_{\infty b}^{z})$ are bounded uniformly with respect to $T \geq T_{0}$:
\begin{equation}
\exists b>0, \quad \mid \quad \forall t \geqslant 0, \quad \|\widehat{y}(t)\| \leqslant b.
\label{boundy}
\end{equation}

\noindent $(A_{4})$: (\textit{Boundedness of the optimal costs}): The optimal costs of $\left(\mathcal{P}_{[0, T]}^{x,z}\right)$, $(\mathcal{P}_{\infty f}^{x})$ and $(\mathcal{P}_{\infty b}^{z})$ are bounded uniformly with respect to $T \geq T_{0}$.
\vspace{0.2cm}

\noindent $(A_{5})$: The minimizer $(\bar{y}, \bar{u})$ of \eqref{statnonlingen0} is unique and there exists a unique Lagrange multiplier $\bar{\lambda}$ satisfying \eqref{KKTnonlin}. Moreover, we assume that $\bar{u} \in \overset {\circ}{\Omega}$ (interior of $\Omega$).
\vspace{0.2cm}

In the sequel, we will also need a concept of  {\em strict dissipativity}.  
 We recall that \eqref{P0T1}-\eqref{P0T2} is \textit{dissipative} at $(\bar{y},\bar{u})$ with respect the the supply rate function $w$ if there exists a bounded function $S: \mathbb{R}^{n} \longrightarrow \mathbb{R}$, called \textit{storage function} such that for any admissible pair $(y(\cdot),u(\cdot))$ and any $T>0$:
\begin{equation}
S(y(0))+\int_{0}^{T} w (y(t),u(t)) \, dt \geqslant S(y(T)).
\label{dis_ineq}
\end{equation}
The system is \textit{strictly dissipative} if, in addition, there exists some function $\alpha(\cdot)$ of class $\mathcal{K}$ (i.e., $\alpha:[0, +\infty) \longrightarrow [0, +\infty)$ continuous, increasing,  and such that $\alpha(0)=0$) and for any $T>0$ we have:
\begin{equation}
S(y(0))+\int_{0}^{T} w (y(t),u(t)) \, dt \geqslant S(y(T))+\int_{0}^{T}\alpha\left(\left\|\begin{array}{ll}
y(t)-\bar{y} \\
u(t)-\bar{u}
\end{array} \right\|\right) \, dt.
\label{strict_dis_ineq}
\end{equation}

In this work we assume the following.
\medskip

\noindent $(A_{6})$: (\textit{Strict dissipativity property}): The family of optimal control problems \eqref{P0T1}-\eqref{P0T2} indexed by T is strictly dissipative at $(\bar{y},\bar{u})$ with respect to the supply rate function $w$ defined by \eqref{shiftedcost} with a storage function $S$.
\vspace{0.2cm}

\noindent The notion of strict dissipativity was introduced in \cite{21} and already used in \cite{14}, \cite{22}, \cite{16} and \cite{15} to derive turnpike properties. 
\vspace{0.2cm}

\noindent \textit{\textbf{Assumption of local nature}}: 
\vspace{0.2cm}

\noindent $(A_{7})$: Setting $A:=\dfrac{\partial f}{\partial y}(\bar{y},\bar{u})$, $B:=\dfrac{\partial f}{\partial u}(\bar{y},\bar{u})$, the pair $(A, B)$ satisfies the Kalman rank condition, i.e, the linearized control system at $(\bar{y},\bar{u})$ is controllable.
\vspace{0.2cm}

\noindent $(A_{8})$: (\textit{Local boundedness of the minimum time trajectories and controls near the turnpike}): There exists $r>0$ such that for any $x \in \bar{B}(\bar{y},r)$ the minimum time trajectory to $\bar{y}$ starting from $x$, denoted by $y_{x}^{\tau_{f}}(\cdot)$ and the associated control, denoted by $u^{\tau_{f}}(\cdot)$ remain in the neighbourhood of respectively $\bar{y}$ and $\bar{u}$ uniformly with respect to $x$, i.e., 
\begin{equation}
\exists r , K_{r}>0 \  \mid\ \forall x \in \bar{B}(\bar{y},r)\quad \forall t \in [0,\tau_{f}(x)] \quad  \|y_{x}^{\tau_{f}}(t)-\bar{y} \|+\|u^{\tau_{f}}(t)-\bar{u} \| \leqslant K_{r}
\label{stlc}
\end{equation}
where $\tau_{f}(\cdot)$ is the minimum time function to reach $\bar{y}$ with the dynamics $f$.

\section{Comments}
%The assumption that ``admissible controls remain in $\Omega$'' can be weakened to ``optimal controls  remain in $\Omega$''. This is for example usually the case for control-affine systems with quadratic cost.

The assumptions $(A_{1})$ and $(A_{7})$ together imply that there exists $r>0$ such that for any $x \in B(\bar{y},r)$, there exists an admissible trajectory steering the control system from $x$ to $\bar{y}$ in finite time. Thus the minimum time function $\tau_{f}(\cdot)$ is well defined on $B(\bar{y},r)$ and is continuous at $\bar{y}$. This is classical result that can be found, for instance, in \cite{1}. The result remains true for the minimum time function associated to the backward-in-time dynamics $-f$, denoted by $\tau_{-f}(\cdot)$. 

\medskip
The assumption $(A_{8})$ requires the local boundedness of the minimum time trajectories and controls for any trajectory starting in the previously defined neighbourhood of the turnpike. This assumption is satisfied if the minimum time function is $C^{1}$ in the neighbourhood of $\bar{y}$. We highlight here that the regularity of the minimum time function has been widely studied in the literature: it is well known that under appropriate controllability type conditions the minimum time function has an open domain of definition and is locally Lipschitz on it, see for instance \cite{1}-\cite{23}. It is thus differentiable almost everywhere on its domain. The value function fails in general to be differentiable at points that are reached by at least two minimum time trajectories and its differentiability at a point does not guarantee continuous differentiability around this point. In \cite{24}, the authors show that, under some assumptions on the regularity and target smoothness (which excludes the singleton case), the nonemptiness of the  proximal subdifferential of the minimum time function at a point implies its continuous differentiability in a neighborhood of this point. An analogous result has been proved for the value function of the Bolza problem in \cite{25} in the case where the initial state is a prescribed point and the final state is let free. In \cite{26}, the author gives a survey of results on the regularity of the minimum time map for control-affine systems with prescribed initial and final points. Finally, for results on the set where the value function is differentiable we refer the reader to \cite{27}, \cite{28}, \cite{29}, \cite{30}, \cite{31} and references therein.

\medskip
The (strict) dissipativity property $(A_{6})$ is certainly the less intuitive assumption to check in practice. In general, when the system is dissipative, storage functions are closely related to some viscosity sub-solutions of partial differential inequalities called Hamilton-Jacobi inequalities. We refer the reader to \cite[Chapter 4]{32}  for more details on this subject. One can remark that, under suitable regularity and boundedness assumptions on the dynamics and the cost, the value function (its opposite more precisely) can be taken as a storage function, and the dissipativity inequality is then deduced from the Dynamic Programming Principle (DPP).  Note also that the infinitesimal form of the (non-strict) dissipativity inequality \eqref{dis_ineq}, with $\alpha=0$, is the Hamilton-Jacobi inequality:
\begin{equation*}
H_{1}(y, \nabla S(y),u) \leqslant -f^{0}(\bar{y}, \bar{u})
\end{equation*}
where $H_{1}$ is the maximized normal Hamiltonian (indeed, divide by $t_{1}-t_{0} > 0$ and take the limit $t_{1}-t_{0} \rightarrow 0$). The existence of $C^{1}$ solutions is therefore related to the so-called weak KAM theory (see \cite{38}). In this context, the singleton $\{(\bar{y},\bar{u})\}$ is the Aubry set and $f^{0}(\bar{y}, \bar{u})$ is the Ma\~{n}é critical value.

\begin{remark} \label{disbacdyn} The strict dissipativity inequality \eqref{strict_dis_ineq} remains true for the dynamics $-f$, provided that one switches the initial and final states. The corresponding storage function is $-S(\cdot)$.
\end{remark}

\section{Main result}

\begin{theorem} \label{thm1} Under Assumptions $(A_{1})-(A_{8})$, the value function \eqref{valuegen0} satisfies
\begin{equation}
v(T,x,z)= T.\bar{v}+v_{f}(x)+v_{b}(z)+\mathrm{o}(1) 
\label{Asyvalnonlin}
\end{equation}
as $T \to +\infty$.
\end{theorem}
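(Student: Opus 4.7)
The plan is to first absorb the dominant term by working with the shifted cost. Since $J_{T,x,z}(u)=C_T(u,x,z)+T\bar v$, one has $v(T,x,z)=T\bar v+\inf_u C_T(u,x,z)$, so the theorem reduces to proving
\[
\inf_{u\in\mathcal{U}^{\Omega}} C_T(u,x,z)\longrightarrow v_f(x)+v_b(z)\qquad\text{as } T\to+\infty.
\]
I establish this by a matching pair of upper and lower inequalities. A preliminary observation, used repeatedly below, is that strict dissipativity $(A_6)$ combined with the finiteness of the infinite-horizon costs $(A_4)$ forces $\widehat y_{\infty f}(t),\widehat y_{\infty b}(t)\to\bar y$ as $t\to+\infty$: indeed, \eqref{strict_dis_ineq} with $T=+\infty$ and the boundedness of $S$ yield $\int_0^{+\infty}\alpha(\|\widehat y-\bar y\|+\|\widehat u-\bar u\|)\,dt<+\infty$, and the $\mathcal K$-class property of $\alpha$ together with the uniform continuity of the optimal trajectories (from $(A_1)$, $(A_3)$ and compactness of $\Omega$) yields the convergence.

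For the upper bound, I construct an admissible control for $\bigl(\mathcal P_{[0,T]}^{x,z}\bigr)$ by concatenation. Fix large parameters $T_1,T_2$ (to be sent to $+\infty$ at the end); apply $\widehat u_{\infty f}$ on $[0,T_1]$, so that $\widehat y_{\infty f}(T_1)\in B(\bar y,\delta)$ for any $\delta$ once $T_1$ is large. By the Kalman condition $(A_7)$ and Assumption $(A_8)$, steer in short time from $\widehat y_{\infty f}(T_1)$ exactly to $\bar y$ along a trajectory contained in a fixed neighbourhood of $\bar y$ where $w$ is bounded; the cost of this connector is $O(\tau_f(\widehat y_{\infty f}(T_1)))=\mathrm{o}_{\delta\to 0}(1)$. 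Stay at $(\bar y,\bar u)$ on the middle interval (zero contribution to $C_T$). Near time $T$, perform the symmetric construction via the time reversal of $\widehat y_{\infty b}$: a short connector from $\bar y$ to $\widehat y_{\infty b}(T_2)$ (valid by Remark~\ref{disbacdyn}), then $y(t)=\widehat y_{\infty b}(T-t)$ on $[T-T_2,T]$, which is a forward trajectory of $f$ ending at $z$. Summing the five contributions and letting first $T\to\infty$, then $T_1,T_2\to\infty$, then $\delta\to 0$ produces $v(T,x,z)-T\bar v\le v_f(x)+v_b(z)+\mathrm{o}(1)$.

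For the lower bound, I start from the optimal pair $(\widehat y_T,\widehat u_T)$ and first derive a quantitative measure-turnpike property: integrating \eqref{strict_dis_ineq} over $[0,T]$, the $\int_0^T\alpha\,dt$ term is uniformly bounded in $T$ by $(A_4)$ and boundedness of $S$, so for any $\delta>0$ the Lebesgue measure of $\{t\in[0,T]:\widehat y_T(t)\notin B(\bar y,\delta)\}$ stays bounded uniformly in $T$. Hence for $T$ large I can pick $t_1\in[0,T/3]$ and $t_2\in[2T/3,T]$ with $\widehat y_T(t_i)\in\bar B(\bar y,\delta)$, and split
\[
C_T(\widehat u_T,x,z)=\int_0^{t_1}w\,dt+\int_{t_1}^{t_2}w\,dt+\int_{t_2}^{T}w\,dt.
\]
Extending $(\widehat y_T,\widehat u_T)\big|_{[0,t_1]}$ by the connector from $\widehat y_T(t_1)$ to $\bar y$ and then by the constant pair $(\bar y,\bar u)$ yields an admissible competitor for $(\mathcal P_{\infty f}^{x})$, so the first integral is $\ge v_f(x)-\mathrm{o}_{\delta\to 0}(1)$; the symmetric argument bounds the third integral by $v_b(z)-\mathrm{o}_{\delta\to 0}(1)$. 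The middle integral is bounded below by $S(\widehat y_T(t_2))-S(\widehat y_T(t_1))$ via \eqref{dis_ineq}, which is $\mathrm{o}_{\delta\to 0}(1)$ upon picking a storage function continuous at $\bar y$ (for instance $-v_f$, whose continuity at $\bar y$ with value $0$ follows from the same connector construction). Adding the three bounds and letting $\delta\to 0$ gives the desired lower bound.

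The main obstacle is making every boundary and connector term vanish uniformly in $T$. Three delicate points stand out: continuity at $\bar y$ of $v_f$, $v_b$ and of a chosen storage function with common value $0$; a quantitative estimate of the connector cost in terms of $\delta$, obtained from $(A_7)$-$(A_8)$ together with continuity of $w$ on a fixed bounded region (ensured by $(A_3)$); and a truly quantitative measure-turnpike statement from strict dissipativity, guaranteeing the existence of the slicing times $t_1,t_2$ for all $T$ large enough. Once these technical points are rigorously in place, the two matching bounds combine to yield \eqref{Asyvalnonlin}.
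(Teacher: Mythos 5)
Your upper bound is essentially the paper's Step 2 (concatenate $\widehat u_{\infty f}$, a short minimum-time connector to $\bar y$, the constant control $\bar u$, a reversed connector, and the time-reversal of $\widehat u_{\infty b}$), and that part is sound. The gap is in your lower bound, specifically the middle integral $\int_{t_1}^{t_2} w\,dt$. The dissipation inequality \eqref{dis_ineq} with the storage function provided by $(A_6)$ only yields $\int_{t_1}^{t_2} w\,dt \geqslant S(\widehat y_T(t_2))-S(\widehat y_T(t_1)) \geqslant -2\sup|S|$, since $S$ is merely assumed \emph{bounded}, not continuous at $\bar y$; so this term does not vanish as $\delta\to 0$. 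Your proposed repair --- replacing $S$ by $-v_f$ --- is not covered by the hypotheses either: one must first prove that $-v_f$ satisfies \eqref{dis_ineq}, i.e.\ a dynamic programming inequality $v_f(\widehat y_T(t_1))\leqslant \int_{t_1}^{t_2}w\,dt+v_f(\widehat y_T(t_2))$, which requires $v_f$ to be finite and the infinite-horizon problem to admit admissible competitors from the \emph{arbitrary} points $\widehat y_T(t_i)$, whereas $(A_2)$ and $(A_4)$ only assert existence and finiteness for the fixed initial data $x$ and $z$. One would additionally need two-sided continuity of $v_f$ at $\bar y$ (the connector argument gives the upper estimate $v_f(y)\leqslant \tau_f(y)M_r+v_f(\bar y)$ easily, but the matching lower estimate needs a backward connector and again finiteness of $v_f$ off the nominal trajectory). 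None of this is fatal, but as written the middle term is not controlled.

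The paper avoids the issue entirely by splitting at a \emph{single} time $t(T)$ with $\widehat y_T(t(T))\to\bar y$ (Lemma \ref{lemtnpke}), so that $C_T=C_T^f+C_T^b$ with no middle piece: the forward half extended by a connector and $\bar u$ is a competitor for $(\mathcal P_{\infty f}^{x})$, giving $v_f(x)\leqslant C_T^f+\epsilon/2$, and the time-reversed backward half gives $v_b(z)\leqslant C_T^b+\epsilon/2$; summing yields $v_f(x)+v_b(z)\leqslant C_T+\epsilon$ directly. I recommend you adopt this two-piece decomposition: your three-piece version buys nothing here and forces you to prove regularity of a storage function (or of $v_f$) that the standing assumptions do not supply. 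Your remaining ingredients --- the measure-turnpike estimate guaranteeing near-turnpike times, the Barbalat-type argument for $\widehat y_{\infty f}(t),\widehat y_{\infty b}(t)\to\bar y$, and the $O(\tau_f(\cdot)M_r)$ connector cost from $(A_8)$ and the continuity of the minimum-time functions at $\bar y$ --- all match the paper's Lemmas \ref{Barlem}--\ref{lem2} and are correct.
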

\noindent In order to prove Theorem \ref{thm1}, we need some preliminary lemmas.
\subsection{Some useful lemmas}
\begin{lemma}[Barbalat's lemma] \label{Barlem}
Assume that $f:[0, +\infty) \longrightarrow \mathbb{R}$ is uniformly continuous and that $\underset{t \to +\infty}{\lim} \displaystyle{\int_{0}^{t}}f(\tau)d\tau$ exists and is finite, then $\underset{t \to +\infty}{\lim}f(t)=0$.
\end{lemma}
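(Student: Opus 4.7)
The plan is to argue by contradiction. Suppose the conclusion fails; then there exist $\varepsilon_{0}>0$ and a sequence $t_{n}\to+\infty$ (which I may take with $t_{n+1}>t_{n}+1$) such that $|f(t_{n})|\geqslant\varepsilon_{0}$ for every $n$. Passing to a subsequence, I may assume that $f(t_{n})$ has a constant sign, say $f(t_{n})\geqslant\varepsilon_{0}$ for all $n$ (the other case is identical after replacing $f$ by $-f$, which preserves both hypotheses).

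Next I would invoke uniform continuity to produce $\delta>0$ such that $|s-t|\leqslant\delta$ implies $|f(s)-f(t)|\leqslant\varepsilon_{0}/2$. Shrinking $\delta$ if necessary I may further assume $\delta\leqslant 1$, so that the intervals $I_{n}:=[t_{n},t_{n}+\delta]$ are disjoint and eventually disjoint from $[0,t_{1}]$. On each $I_{n}$ the estimate $f(s)\geqslant f(t_{n})-\varepsilon_{0}/2\geqslant\varepsilon_{0}/2$ holds, and therefore
\begin{equation*}
\int_{t_{n}}^{t_{n}+\delta}f(\tau)\,d\tau\;\geqslant\;\frac{\delta\,\varepsilon_{0}}{2}\;>\;0.
\end{equation*}

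Finally I would derive the contradiction from the Cauchy criterion: since $F(t):=\int_{0}^{t}f(\tau)\,d\tau$ has a finite limit as $t\to+\infty$, for any $\eta>0$ there is $T_{\eta}$ such that $|F(b)-F(a)|\leqslant\eta$ whenever $a,b\geqslant T_{\eta}$. Applied with $a=t_{n}$, $b=t_{n}+\delta$ and $\eta=\delta\varepsilon_{0}/4$, this contradicts the lower bound displayed above for all $n$ large enough. Hence the initial assumption was wrong and $f(t)\to 0$ as $t\to+\infty$.

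The argument is essentially routine; the only subtlety worth flagging is the sign-selection step, because uniform continuity only controls $|f(s)-f(t_{n})|$, so one really does need $f(t_{n})$ of a fixed sign to turn the pointwise lower bound into a one-sided lower bound on the integral. Everything else is a direct combination of uniform continuity on $I_{n}$ with the Cauchy criterion for the convergent improper integral.
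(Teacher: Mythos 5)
Your proof is correct and follows essentially the same route as the paper's: contradiction, uniform continuity to get a lower bound $|f|\geqslant\varepsilon_{0}/2$ on intervals $[t_{n},t_{n}+\delta]$, and the resulting violation of the Cauchy criterion for the convergent integral. Your explicit sign-selection step is a slightly more careful treatment of a point the paper handles implicitly (it writes $\bigl|\int_{t_{n}}^{t_{n}+\delta}f\bigr|=\int_{t_{n}}^{t_{n}+\delta}|f|$, which is justified because $|f|\geqslant\varepsilon_{0}/2>0$ on that interval forces $f$ to keep a constant sign there by continuity), but the argument is the same.
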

\begin{proof} A proof can be found for instance in \cite{36}.  By contradiction, take $\epsilon >0$ and assume that $f(t)$ does not converge to $0$ as $t \to +\infty$. In this case, there exists an increasing sequence $(t_{n})_{n \in \mathbb{N}}$ in $\mathbb{R}^{+}$ such that $|f(t_{n})| >\epsilon$. By the uniform continuity of $f$ there exists $\delta>0$ such that,  for any $n \in \mathbb{N}$, and any $t \in \mathbb{R}$
\begin{equation*}
|t-t_{n}| \leqslant \delta \implies |f(t)-f(t_{n})| \leqslant \dfrac{\epsilon}{2}.
\end{equation*}
So for any $t \in [t_{n},t_{n}+\delta]$ and any $n \in \mathbb{N}$, one has
\begin{equation*}
|f(t)|=|f(t_{n})-(f(t_{n})-f(t))| \geqslant |f(t_{n})|-|f(t_{n})-f(t)| \geqslant \dfrac{\epsilon}{2}.
\end{equation*}
Therefore,
\begin{equation*}
\left|\int_{0}^{t_{n}+\delta}f(t)dt-\int_{0}^{t_{n}}f(t)dt\right|= \left|\int_{t_{n}}^{t_{n}+\delta}f(t)dt \right|=\int_{t_{n}}^{t_{n}+\delta}|f(t)|dt \geqslant \dfrac{\delta.\epsilon}{2}.
\end{equation*}
The latter inequality contradicts the convergence of $\displaystyle{\int_{0}^{t}}f(\tau)d\tau$ as $t \to +\infty$ and the lemma follows. \end{proof}
\begin{lemma} \label{lemtnpke} The optimal trajectory $\widehat{y}_{T}(\cdot)$ of $(\mathcal{P}_{0, T}^{x,z})$ satisfies
\begin{equation}
\exists t(T) \in [0, T] \quad \mid \quad \widehat{y}_{T}\left( t(T) \right) \longrightarrow \bar{y} \text{ as }T \to +\infty.
\label{existstT}
\end{equation}
\end{lemma}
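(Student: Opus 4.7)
The plan is to exploit strict dissipativity $(A_6)$ to show that the optimal trajectory spends most of the time near $\bar{y}$, and then extract a sequence of times at which it converges to $\bar{y}$.

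First, I apply the strict dissipativity inequality \eqref{strict_dis_ineq} to the optimal pair $(\widehat{y}_T, \widehat{u}_T)$ on the interval $[0,T]$. This gives
\begin{equation*}
\int_{0}^{T} \alpha\!\left(\|\widehat{y}_T(t)-\bar{y}\|\right) dt
\;\leq\; \int_{0}^{T} \alpha\!\left(\left\|\begin{array}{l}\widehat{y}_T(t)-\bar{y}\\ \widehat{u}_T(t)-\bar{u}\end{array}\right\|\right) dt
\;\leq\; S(x)-S(z)+C_T(\widehat{u}_T,x,z),
\end{equation*}
where in the first inequality I used that $\alpha$ is nondecreasing and that the norm in the joint $(y,u)$-variable dominates the norm in $y$. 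By $(A_6)$ the storage function $S$ is bounded, so $S(x)-S(z)$ is a fixed constant, and by $(A_4)$ the optimal cost $C_T(\widehat{u}_T,x,z)$ is bounded uniformly in $T \geq T_0$. Hence there exists $M>0$, independent of $T$, with
\begin{equation*}
\int_{0}^{T} \alpha\!\left(\|\widehat{y}_T(t)-\bar{y}\|\right) dt \;\leq\; M \qquad \forall\, T\geq T_0.
\end{equation*}

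Next I use a Markov-type argument. Fix $\ep>0$ and set
$E_T^{\ep}:=\{t\in[0,T]\,:\, \|\widehat{y}_T(t)-\bar{y}\|>\ep\}$. Since $\alpha$ belongs to class $\mathcal{K}$, $\alpha(\ep)>0$, and on $E_T^{\ep}$ the integrand exceeds $\alpha(\ep)$, so
\begin{equation*}
\alpha(\ep)\,|E_T^{\ep}|\;\leq\;\int_{0}^{T}\alpha\!\left(\|\widehat{y}_T(t)-\bar{y}\|\right)dt \;\leq\; M,
\end{equation*}
yielding $|E_T^{\ep}|\leq M/\alpha(\ep)$. Therefore, as soon as $T>M/\alpha(\ep)$, the complement $[0,T]\setminus E_T^{\ep}$ has positive Lebesgue measure and, by continuity of $\widehat{y}_T(\cdot)$, I can pick some $t\in[0,T]$ with $\|\widehat{y}_T(t)-\bar{y}\|\leq \ep$.

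Finally I conclude by a diagonal extraction. Choose a sequence $\ep_n\downarrow 0$ and an increasing sequence $T_n\geq T_0$ with $T_n > M/\alpha(\ep_n)$ and $T_n\to+\infty$. For each $T\geq T_1$, let $n(T)$ be the largest integer with $T_{n(T)}\leq T$; the previous paragraph yields a time $t(T)\in[0,T]$ such that $\|\widehat{y}_T(t(T))-\bar{y}\|\leq \ep_{n(T)}$. Since $n(T)\to +\infty$ as $T\to+\infty$, we obtain $\widehat{y}_T(t(T))\to \bar{y}$, which is \eqref{existstT}.

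I do not foresee a real obstacle here: the only potentially delicate step is passing from the integral bound on $\alpha(\|\widehat{y}_T-\bar{y}\|)$ to a small-measure estimate, and this is exactly what the $\mathcal{K}$-property of $\alpha$ in the definition of strict dissipativity was designed to allow. The rest is routine book-keeping.
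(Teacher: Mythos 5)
Your proof is correct and rests on exactly the same ingredients as the paper's: the strict dissipativity inequality \eqref{strict_dis_ineq} applied to the optimal pair, together with the boundedness of $S$ and Assumption $(A_{4})$, to control $\int_{0}^{T}\alpha\left(\|\widehat{y}_{T}(t)-\bar{y}\|\right)dt$ uniformly in $T$. The only (harmless) difference is the final extraction of $t(T)$: the paper shows that the time-average of $\alpha$ tends to zero (by contradiction) and invokes the mean value theorem for integrals, whereas you keep the uniform integral bound and finish with a Chebyshev-type measure estimate plus a diagonal choice of $\varepsilon_{n(T)}$ --- both routes are valid and of comparable length.
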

\begin{proof} For $T \geq T_{0}$, the strict dissipativity inequality applied to the optimal pair $(\widehat{y}_{T}(\cdot),\widehat{u}_{T}(\cdot))$ implies
\begin{equation}
\begin{split}
&f^{0}(\bar{y},\bar{u}) \leqslant \dfrac{1}{T}\int_{0}^{T} f^{0}(\widehat{y}_{T}(s),\widehat{u}_{T}(s)) ds+\dfrac{S(x)-S(z)}{T} \\
&\hspace{3.5cm}-\dfrac{1}{T}\int_{0}^{T}\alpha\left(\left\|\begin{array}{ll}
\widehat{y}_{T}(s)-\bar{y} \\
\widehat{u}_{T}(s)-\bar{u}
\end{array} \right\|\right) ds.
\end{split}
\label{strict_dis_ineq11}
\end{equation}
Let us prove that
\begin{equation}
\displaystyle{\dfrac{1}{T}}\int_{0}^{T}\alpha\left(\left\|\begin{array}{ll}
\widehat{y}_{T}(s)-\bar{y} \\
\widehat{u}_{T}(s)-\bar{u}
\end{array} \right\|\right) ds \longrightarrow 0 \text{ as }  T \to +\infty.
\label{convalfa}
\end{equation}
Let us assume by contradiction that this is not true: then there exists $\eta>0$ and a sequence $T_{k} \longrightarrow +\infty$ such that
\begin{equation}
\dfrac{1}{T_{k}}\int_{0}^{T_{k}}\alpha\left(\left\|\begin{array}{ll}
\widehat{y}_{T_{k}}(s)-\bar{y} \\
\widehat{u}_{T_{k}}(s)-\bar{u}
\end{array} \right\|\right) ds \geqslant \eta.
\label{int_alfa}
\end{equation}
By multiplying the inequality \eqref{strict_dis_ineq11} by $T_{k}$ one gets
\begin{equation}
\int_{0}^{T_{k}}\alpha\left(\left\|\begin{array}{ll}
\widehat{y}_{T_{k}}(s)-\bar{y} \\
\widehat{u}_{T_{k}}(s)-\bar{u}
\end{array} \right\|\right) ds \leqslant \int_{0}^{T_{k}} w(\widehat{y}_{T_{k}}(s),\widehat{u}_{T_{k}}(s)) ds+S(x)-S(z)
\label{strict_dis_ineq1bis}
\end{equation}
which implies from \eqref{int_alfa}
\begin{equation}
T_{k}.\eta \leqslant \int_{0}^{T_{k}} w(\widehat{y}_{T_{k}}(s),\widehat{u}_{T_{k}}(s)) ds+S(x)-S(z).
\label{strict_dis_ineq11bis}
\end{equation}
Assumption $(A_{4})$ leads to a contradiction in the above inequality when $k \to +\infty$.
\vspace{0.2cm}

\noindent Then we have $\displaystyle{\dfrac{1}{T}}\int_{0}^{T}\alpha\left(\left\|\begin{array}{ll}
\widehat{y}_{T}(t)-\bar{y} \\
\widehat{u}_{T}(t)-\bar{u}
\end{array} \right\|\right) ds \geqslant \displaystyle{\dfrac{1}{T}}\int_{0}^{T}\alpha\left(\left\|\widehat{y}_{T}(t)-\bar{y} \right\|\right) ds \longrightarrow 0$ as $T \to +\infty$ which implies, using the mean value theorem, that
\begin{equation}
\exists t(T) \in [0, T] \quad \mid \quad \hspace{0.2cm} \alpha\left(\left\|
\widehat{y}_{T}(t(T))-\bar{y} \right\|\right)  \longrightarrow 0 \text{ as }T \to +\infty.
\label{meanvaluethm}
\end{equation}
From the properties of  $\alpha(\cdot)$, this leads to
\begin{equation}
\widehat{y}_{T}\left( t(T) \right) \longrightarrow \bar{y} \text{ as }T \to +\infty.
\end{equation}
The lemma is proved.
\end{proof}
\begin{remark}
As noted in \cite{33}, if one makes the change of variable $s=\dfrac{t}{T}$, in \eqref{convalfa} then
\begin{equation*}
\displaystyle{\dfrac{1}{T}}\int_{0}^{T}\alpha\left(\left\|\begin{array}{ll}
\widehat{y}_{T}(t)-\bar{y} \\
\widehat{u}_{T}(t)-\bar{u}
\end{array} \right\|\right) \, dt=\displaystyle{\int_{0}^{1}}\alpha\left(\left\|\begin{array}{ll}
\widehat{y}_{T}(T.s)-\bar{y} \\
\widehat{u}_{T}(T.s)-\bar{u}
\end{array} \right\|\right) ds \longrightarrow 0 \text{ as } T \to +\infty
\end{equation*}
which implies, by the converse Lebesgue theorem (see \cite[Theorem IV.9]{34}) that there exists an increasing sequence of time horizons $(T_{k})_{k \in \mathbb{N}}$  such that $\widehat{y}_{T_{k}}(T_{k}.s) \longrightarrow \bar{y}$ and $\widehat{u}_{T_{k}}(T_{k}.s) \longrightarrow \bar{u}$ as $k \to +\infty$ for almost every $s \in [0, 1]$. The latter looks like a measure turnpike result. However we did not exploit this result in our paper.
\end{remark}

\begin{lemma} \label{lem2} The optimal trajectory $\widehat{y}_{\infty f}(\cdot)$ of $(\mathcal{P}_{\infty f}^{x})$ satisfies
\begin{equation}
\widehat{y}_{\infty f}(t) \longrightarrow \bar{y} \text{ as } t \to +\infty
\end{equation}
\end{lemma}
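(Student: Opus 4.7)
The plan is to mimic the argument of Lemma \ref{lemtnpke}, but to exploit the full integrability coming from the infinite-horizon problem and finish with Barbalat's lemma rather than with the mean value theorem.

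First, I would apply the strict dissipativity inequality \eqref{strict_dis_ineq} to the optimal pair $(\widehat{y}_{\infty f}(\cdot), \widehat{u}_{\infty f}(\cdot))$ on every interval $[0,T]$:
\begin{equation*}
\int_{0}^{T}\alpha\left(\left\|\begin{array}{l} \widehat{y}_{\infty f}(t)-\bar{y} \\ \widehat{u}_{\infty f}(t)-\bar{u}\end{array}\right\|\right) dt \leq S(x) - S(\widehat{y}_{\infty f}(T)) + \int_{0}^{T} w(\widehat{y}_{\infty f}(t),\widehat{u}_{\infty f}(t))\, dt.
\end{equation*}
By assumption $(A_4)$ the right-hand integral is uniformly bounded in $T$, and since $S$ is a bounded storage function the boundary terms are uniformly bounded as well. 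Letting $T \to +\infty$ this yields $\int_{0}^{+\infty}\alpha(\|(\widehat{y}_{\infty f}(t)-\bar{y}, \widehat{u}_{\infty f}(t)-\bar{u})\|)\, dt < +\infty$. Since $\alpha$ is of class $\mathcal{K}$, hence increasing, one deduces the weaker but sufficient information
\begin{equation*}
\int_{0}^{+\infty}\alpha\bigl(\|\widehat{y}_{\infty f}(t)-\bar{y}\|\bigr)\, dt < +\infty.
\end{equation*}

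Next I would invoke Barbalat's lemma (Lemma \ref{Barlem}) on the scalar function $g(t) := \alpha(\|\widehat{y}_{\infty f}(t)-\bar{y}\|)$. The preliminary work is to verify uniform continuity of $g$: by $(A_3)$ the trajectory $\widehat{y}_{\infty f}$ stays in the compact ball $\overline{B}(0,b)$; since $u(\cdot)$ takes values in the compact set $\Omega$ and $f$ is continuous, $\dot{\widehat{y}}_{\infty f}(t) = f(\widehat{y}_{\infty f}(t), \widehat{u}_{\infty f}(t))$ is uniformly bounded, so $\widehat{y}_{\infty f}$ is globally Lipschitz, hence uniformly continuous. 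Composing with the Lipschitz map $y \mapsto \|y-\bar{y}\|$ and with $\alpha$, which is continuous on the compact interval $[0, \,b+\|\bar{y}\|]$ and therefore uniformly continuous there, gives uniform continuity of $g$ on $[0,+\infty)$. Barbalat's lemma then yields $g(t) \to 0$ as $t \to +\infty$.

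Finally, I would conclude by inverting $\alpha$: since $\alpha$ is continuous, increasing, and vanishes at zero, the convergence $\alpha(\|\widehat{y}_{\infty f}(t)-\bar{y}\|) \to 0$ implies $\|\widehat{y}_{\infty f}(t)-\bar{y}\| \to 0$ (if not, some subsequence would satisfy $\|\widehat{y}_{\infty f}(t_n)-\bar{y}\| \geq \delta > 0$, giving $g(t_n) \geq \alpha(\delta) > 0$, a contradiction). This establishes the claim. The only subtle step is the uniform continuity verification: it is tempting to include $\widehat{u}_{\infty f}$ in the argument, but $L^{\infty}$ controls need not be uniformly continuous, which is precisely why one must project onto the state component $\alpha(\|\widehat{y}_{\infty f}(t)-\bar{y}\|)$ before applying Barbalat.
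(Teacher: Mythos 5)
Your proposal is correct and follows essentially the same route as the paper: strict dissipativity plus $(A_4)$ and boundedness of $S$ give integrability of $\alpha(\|\widehat{y}_{\infty f}(t)-\bar{y}\|)$, uniform continuity follows from $(A_1)$, $(A_3)$ and Heine's theorem, and Barbalat's lemma concludes. The closing remark about projecting onto the state component before invoking Barbalat matches exactly what the paper does in passing from the joint norm to $\alpha(\|\widehat{y}_{\infty f}(t)-\bar{y}\|)$.
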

\begin{proof} From \eqref{strict_dis_ineq} we have
\begin{equation}
\int_{0}^{T}\alpha\left(\left\|\begin{array}{ll}
\widehat{y}_{\infty f}(t)-\bar{y} \\
\widehat{u}_{\infty f}(t)-\bar{u}
\end{array} \right\|\right)dt \leqslant \int_{0}^{T}w(\widehat{y}_{\infty f}(t),\widehat{u}_{\infty f}(t))dt + S(x)-S(\widehat{y}_{\infty f}(T))
\end{equation}
the right-hand side of the inequality being bounded uniformly with respect to $T$ one gets
\begin{equation}
\int_{0}^{T}\alpha\left(\|\widehat{y}_{\infty f}(t)-\bar{y}\|\right) \, dt \leqslant \int_{0}^{T}\alpha\left(\left\|\begin{array}{ll}
\widehat{y}_{\infty f}(t)-\bar{y} \\
\widehat{u}_{\infty f}(t)-\bar{u}
\end{array} \right\|\right)dt \underset{T \to +\infty}{=} \mathrm{O}(1)
\label{strictdisbis}
\end{equation}
which implies
\begin{equation}
\Phi(T):= \int_{0}^{T}\alpha\left(\|\widehat{y}_{\infty f}(t)-\bar{y}\|\right) \, dt \underset{T \to +\infty}{=} \mathrm{O}(1).
\label{boundPhi}
\end{equation}
First, we remark that \eqref{boundPhi} and the positivity of $\alpha(\cdot)$ imply the convergence of $\Phi(T)$ as $T \to +\infty$. 
\vspace{0.2cm}

\noindent On the other hand, from $(A_{1})$ and $(A_{3})$ we know that $f$ is of class $C^{1}$ on the compact set $\overline{B}(0,b) \times\overline{B}(0,c)$ thus bounded by a global constant, denoted by $k>0$. Therefore $\widehat{y}_{\infty f}(\cdot)$ is globally Lipschitz continuous in time $t$, and consequently $t \mapsto \|\widehat{y}_{\infty f}(t)-\bar{y} \|$ as well.
\vspace{0.2cm}

\noindent From the boundedness of  $\widehat{y}_{\infty f}(\cdot)$ (see $(A_{3})$) and the continuity of $\alpha(\cdot)$, we deduce the uniform continuity of $t \mapsto \alpha(\|\widehat{y}_{\infty f}(t)-\bar{y} \|)$ over $[0 +\infty)$. Indeed, $\alpha(\cdot)$ being continuous on a compact set, it is uniformly continuous (Heine theorem).
\vspace{0.2cm}

\noindent Applying Barbalat's lemma (see \ref{Barlem}), one has $\alpha(\|\widehat{y}_{\infty f}(t) -\bar{y}\|) \longrightarrow 0$ as $t \to +\infty$, which implies $\widehat{y}_{\infty f}(t) \rightarrow \bar{y}$ as $t \to +\infty$ and the proof is over.
\end{proof}
\begin{corollary} \label{cor1}
Lemma \ref{lem2} remains true for the optimal trajectory of $(\mathcal{P}_{\infty b}^{z})$ problem (with the backward-in-time dynamics $-f$).
\end{corollary}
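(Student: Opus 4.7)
The plan is to replay the proof of Lemma \ref{lem2} verbatim, but with the dynamics $-f$ in place of $f$, the initial datum $z$ in place of $x$, and the storage function $-S$ in place of $S$, as justified by Remark \ref{disbacdyn}. All the structural assumptions $(A_1)$--$(A_6)$ transfer to the time-reversed problem: the dynamics $-f$ is still $C^1$, the storage function $-S$ is still bounded, $(\bar y,\bar u)$ remains an equilibrium of $-f$ (since $-f(\bar y,\bar u)=0$), and $w$ is unchanged.

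Concretely, I would first record the backward strict dissipativity inequality: for every admissible pair $(y(\cdot),u(\cdot))$ of the backward system and every $T>0$,
\begin{equation*}
-S(y(0))+\int_{0}^{T}w(y(t),u(t))\,dt\geqslant -S(y(T))+\int_{0}^{T}\alpha\!\left(\left\|\begin{array}{l}y(t)-\bar y\\ u(t)-\bar u\end{array}\right\|\right)dt,
\end{equation*}
which follows from \eqref{strict_dis_ineq} by the time-reversal change of variable $s=T-t$. Applying this to the optimal pair $(\widehat{y}_{\infty b}(\cdot),\widehat{u}_{\infty b}(\cdot))$ with $y(0)=z$ yields
\begin{equation*}
\int_{0}^{T}\alpha\!\left(\|\widehat{y}_{\infty b}(t)-\bar y\|\right)dt\leqslant \int_{0}^{T}w(\widehat{y}_{\infty b}(t),\widehat{u}_{\infty b}(t))\,dt -S(z)+S(\widehat{y}_{\infty b}(T)).
\end{equation*}
The right-hand side is bounded uniformly in $T$: the integral by assumption $(A_{4})$ (boundedness of the optimal cost of $(\mathcal{P}_{\infty b}^{z})$) and the boundary terms by the boundedness of $S$. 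Hence the increasing map $T\mapsto\int_{0}^{T}\alpha(\|\widehat{y}_{\infty b}(t)-\bar y\|)\,dt$ is bounded and therefore admits a finite limit as $T\to+\infty$.

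Next I would check the uniform continuity hypothesis needed by Barbalat's lemma. By $(A_{3})$ the trajectory $\widehat{y}_{\infty b}(\cdot)$ stays in $\overline B(0,b)$ and by the control constraint $\widehat{u}_{\infty b}(t)\in\overline B(0,c)$, so $f$ is bounded on this compact set, hence $\widehat{y}_{\infty b}(\cdot)$ is Lipschitz in $t$, and so is $t\mapsto\|\widehat{y}_{\infty b}(t)-\bar y\|$. Since $\alpha$ is continuous on the compact image of that Lipschitz map, it is uniformly continuous there (Heine's theorem), so $t\mapsto\alpha(\|\widehat{y}_{\infty b}(t)-\bar y\|)$ is uniformly continuous on $[0,+\infty)$. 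Barbalat's lemma (Lemma \ref{Barlem}) then gives $\alpha(\|\widehat{y}_{\infty b}(t)-\bar y\|)\to 0$, and from the $\mathcal K$-class property of $\alpha$ we conclude $\widehat{y}_{\infty b}(t)\to\bar y$ as $t\to+\infty$.

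The only real point of care is bookkeeping the sign switch of the storage function and the swap of the endpoints when reversing time; beyond that, no new analytic input is required, because both the strict dissipativity and the a priori bounds $(A_{3})$--$(A_{4})$ are posited for the backward infinite-horizon problem as well.
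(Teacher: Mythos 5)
Your proof is correct and is exactly the argument the paper intends: the corollary is stated without proof precisely because one replays Lemma \ref{lem2} with the time-reversed dissipativity inequality of Remark \ref{disbacdyn} (storage function $-S$, endpoints swapped), the bounds $(A_{3})$--$(A_{4})$ for the backward problem, and Barbalat's lemma. Your careful bookkeeping of the sign of $S$ and the endpoint swap is the only nontrivial point, and you handle it correctly.
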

\subsection{Proof of the main result}
\begin{proof} For $T \geq T_{0}$, let us consider the respective optimal solutions $(\widehat{y}_{T}(\cdot),\widehat{u}_{T}(\cdot))$, $(\widehat{y}_{\infty f}(\cdot),\widehat{u}_{\infty f}(\cdot))$ and $(\widehat{y}_{\infty b}(\cdot),\widehat{u}_{\infty b}(\cdot))$ of $\left(\mathcal{P}_{[0,T]}^{x,z}\right)$, $(\mathcal{P}_{\infty f}^{x})$ and $(\mathcal{P}_{\infty b}^{z})$.
\vspace{0.2cm}

\noindent We split the optimal cost $C_{T}(\widehat{u}_{T},x,z)$ as
\begin{align} \label{casgen}
C_{T}(\widehat{u}_{T},x,z)&=\int_{0}^{T} w(\widehat{y}_{T}(t),\widehat{u}_{T}(t)) \, dt\\
&=\underbrace{\int_{0}^{t(T)} w(\widehat{y}_{T}(t),\widehat{u}_{T}(t)) \, dt}_{C_{T}^{f}}+\underbrace{\int_{t(T)}^{T} w(\widehat{y}_{T}(t),\widehat{u}_{T}(t)) \, dt}_{C_{T}^{b}}
\end{align}
where $t(T)$ is defined by \eqref{existstT}.
\vspace{0.2cm}

\noindent We perform the proof in two steps. We first prove that
\begin{equation}  
\text{\textbf{\textcolor{blue}{Step 1:}}} \hspace{0.5cm } v_{f}(x)+v_{b}(z) \leqslant \liminf_{T \to +\infty}\hspace{0.1cm}{C_{T}(\widehat{u}_{T},x,z)} 
\label{ineqthm10}
\end{equation}
Then we prove that:
\begin{equation}  
\text{\textbf{\textcolor{blue}{Step 2:}}} \hspace{0.5cm } \limsup_{T \to +\infty}\hspace{0.1cm}{C_{T}(\widehat{u}_{T},x,z)} \leqslant v_{f}(x)+v_{b}(z) 
\label{ineqthm20}
\end{equation}
which will prove the required result.
\vspace{0.2cm}

\noindent The real number $r$ being defined in Assumption $(A_{8})$, we first remark that $(y,u) \mapsto w(y,u)$ is continuous on $\Omega_{r}:=\overline{B}(\bar{y},K_{r}) \times \overline{B}(\bar{u},K_{r})$ which is a compact set of $\mathbb{R}^{n} \times \mathbb{R}^{p}$. Consequently,
\begin{equation}
\exists M_{r}>0 \hspace{0.2cm} \mid \hspace{0.2cm} \forall (y,u) \in \Omega_{r} ,\hspace{0.2cm}  |w(y,u)|
 \leqslant M_{r}.
\label{boundw}
\end{equation}
Let $\epsilon>0$. The continuity of $\tau_{f}(\cdot)$ at $\bar{y}$ gives:
\begin{equation}
\exists \eta >0 \text{  s.t  } \|x-\bar{y} \| \leqslant \eta \Rightarrow |\tau_{f}(x)| \leqslant \dfrac{\epsilon}{2.M_{r}}.
\label{taufconty}
\end{equation}
The continuity of $\tau_{-f}(\cdot)$ at $\bar{y}$ gives:
\begin{equation}
\exists \nu >0 \text{ s.t } \|x-\bar{y} \| \leqslant \nu \Rightarrow |\tau_{-f}(x)| \leqslant \dfrac{\epsilon}{2.M_{r}}.
\label{taubconty}
\end{equation}
We set $\gamma:=\min(\eta,\nu,r)>0$, and we denote by $B:=B(\bar{y},\gamma)$.
\vspace{0.2cm}

\noindent \textcolor{blue}{$\triangleright$ \textbf{Step 1}}: From Lemma \ref{lemtnpke}, we know that:
\begin{equation}
\exists T_{1} \geqslant 0 \text{ s.t } \forall T \geqslant T_{1}, \hspace{0.2cm} \widehat{y}_{T}\left(t(T)\right) \in \overline{B}.
\label{tnpke1}
\end{equation}
We select a time horizon $T$ such that $T \geqslant \max(T_{0},T_{1})$ and construct $\widecheck{u}(\cdot)$ admissible control for the $(\mathcal{P}_{\infty f}^{x})$ problem as follows:
\begin{equation*}
	\widecheck{u}(t) :=\left\{
	\begin{array}{lll}
	\widehat{u}_{T}(t) &\text{ if } t \in \left[0, t(T)\right] \\[2mm]
	\widehat{u}_{0}(t) &\text{ if } t \in \left[t(T), t(T)+\tau_{0}\right] \\[2mm]
	\bar{u} &\text{ if } t \geqslant t(T)+\tau_{0}
	\end{array}
	\right.
\end{equation*}
where $\tau_{0}:=\tau_{f}\left( \widehat{y}_{T}\left(t(T)\right) \right)$ is the minimum  time to reach $\bar{y}$ from $\widehat{y}_{T}\left(t(T) \right)$ and $\widehat{u}_{0}(\cdot)$ the associated optimal control.
\vspace{0.2cm}

\noindent We infer from \eqref{taufconty} and \eqref{tnpke1} that
\begin{equation} 
\begin{aligned} 
v_{f}(x) &\leqslant \int_{0}^{+\infty} w\left(\widecheck{y}(t),\widecheck{u}(t)\right) \, dt \\
&\leqslant \int_{0}^{t(T)} w\left(\widehat{y}_{T}(t),\widehat{u}_{T}(t)\right) \, dt+\int_{t(T)}^{t(T)+\tau_{0}} w(\underbrace{\widehat{y}_{0}(t),\widehat{u}_{0}(t)}_{\in \Omega_{r}})dt+\int_{t(T)+\tau_{0}}^{+\infty} \underbrace{w(\bar{y},\bar{u})}_{0} dt \\
\hspace{0.01cm}&\leqslant C_{T}^{f}+\tau_{0}.M_{r} \\
&\leqslant C_{T}^{f}+\dfrac{\epsilon}{2}
\end{aligned}
\label{alfa1}
\end{equation}
For the second term $C_{T}^{b}$, we first remark that
\begin{align} \label{CTb}
C_{T}^{b}&=\int_{t(T)}^{T} w\left(\widehat{y}_{T}(t),\widehat{u}_{T}(t)\right) \, dt=\int_{0}^{T-t(T)} w\left((\widetilde{y}_{T}(t),\widetilde{u}_{T}(t)\right) \, dt
\end{align}
where $\left(\widetilde{y}_{T}(t),\widetilde{u}_{T}(t)\right):=\left(\widehat{y}_{T}(T-t),\widehat{u}_{T}(T-t)\right)$ is such that
\begin{equation}
\dot{\widetilde{y}}_{T}(t)=-f(\widetilde{y}_{T}(t),\widetilde{u}_{T}(t)) \text{ with } \widetilde{y}_{T}(0)=z
\label{dynret}
\end{equation}
Noting that the $\widetilde{y}_{T}\left(T-t(T)\right)=\widehat{y}_{T}\left(t(T)\right)$, we construct $\breve{u}(\cdot)$ the admissible control for the $(\mathcal{P}_{\infty b}^{z})$ problem as follows
\begin{equation*}
	\breve{u}(t) :=\left\{
	\begin{array}{lll}
	\widetilde{u}_{T}(t) &\text{ if } t \in \left[0, T-t(T)\right] \\[2mm]
	\widehat{u}_{1}(t) &\text{ if } t \in \left[T-t(T), T-t(T)+\tau_{1}\right] \\[2mm]
	\bar{u} &\text{ if } t \geqslant T-t(T)+\tau_{1}
	\end{array}
	\right.
\end{equation*}
where $\tau_{1}:=\tau_{-f}\left( \widetilde{y}_{T}\left(T-t(T) \right) \right)$ is the minimum  time to reach $\bar{y}$ from $\widetilde{y}_{T}\left(T-t(T) \right)$ and $\widehat{u}_{1}(\cdot)$ the associated optimal control.
\vspace{0.2cm}

\noindent We infer from \eqref{taubconty} and \eqref{tnpke1} that
\begin{equation} 
\begin{aligned} 
v_{b}(z) &\leqslant \int_{0}^{+\infty} w(\breve{y}(t),\breve{u}(t)) \, dt \\
&\leqslant \int_{0}^{T-t(T)} w(\widetilde{y}_{T}(t),\widetilde{u}_{T}(t)) \, dt \\
& \qquad\qquad\qquad +\int_{T-t(T)}^{T-t(T)+\tau_{1}} w(\underbrace{\widehat{y}_{1}(t),\widehat{u}_{1}(t)}_{\in \Omega_{r}}) \, dt+\int_{T-t(T)+\tau_{1}}^{+\infty} \underbrace{w(\bar{y},\bar{u})}_{0} dt \\
\hspace{0.01cm}&\leqslant C_{T}^{b}+\tau_{1}.M_{r} \\
&\leqslant C_{T}^{b}+\dfrac{\epsilon}{2}
\end{aligned}
\label{beta1}
\end{equation}
Combining \eqref{alfa1} and \eqref{beta1}, we obtain
\begin{equation}  
v_{f}(x) +v_{b}(z) \leqslant C_{T}^{f}+C_{T}^{b}+\epsilon=C_{T}(\widehat{u}_{T},x,z)+\epsilon
\label{ineqthm1eps}
\end{equation}
and thus
\begin{equation}  
v_{f}(x)+v_{b}(z) \leqslant \liminf_{T \to +\infty}\hspace{0.1cm}{C_{T}(\widehat{u}_{T},x,z)}
\label{ineqthm1}
\end{equation}
\vspace{0.2cm}

\noindent \textcolor{blue}{$\triangleright$  \textbf{Step 2}}: From Lemma \ref{lem2} and Corollary \ref{cor1}, we have $\widehat{y}_{\infty f}(t) \longrightarrow \bar{y}$ and $\widehat{y}_{\infty b}(t) \longrightarrow \bar{y}$ as $t \to +\infty$. Consequently:
\begin{equation}
\exists T_{2} >0 \text{ s.t } \forall T \geqslant T_{2}, \hspace{0.2cm} \left\|\widehat{y}_{\infty f}\left(\dfrac{T}{2}-1\right)-\bar{y}\right\| \leqslant \gamma
\label{yinffconv}
\end{equation}
and
\begin{equation}
\exists T_{3} >0 \text{ s.t } \forall T \geqslant T_{3}, \hspace{0.2cm} \left\|\widehat{y}_{\infty b}\left(\dfrac{T}{2}-1\right)-\bar{y}\right\| \leqslant \gamma
\label{yinfbconv}
\end{equation}
Take $T \geqslant \max\left(T_{2},T_{3}\right)$ and denote:
\begin{itemize}
\item $\tau_{3}:=\tau_{f}\left( \widehat{y}_{\infty f}\left(\dfrac{T}{2}-1\right)\right)$ and $\widehat{u}_{3}(\cdot)$ the associated optimal control;
\item $\tau_{4}:=\tau_{-f}\left( \widehat{y}_{\infty b}\left(\dfrac{T}{2}-1\right)\right)$ and $\widehat{u}_{4}(\cdot)$ the associated optimal control.
\end{itemize}
\vspace{0.2cm}
\noindent We construct the admissible control $\underline{u}_{T}(\cdot)$ for $(\mathcal{P}_{0,T})_{x,z}$ as follows (see Figure \ref{fig:traj_opt}):
\begin{equation*}
	\underline{u}_{T}(t) :=\left\{
	\begin{array}{lllll}
	\widehat{u}_{\infty f}(t) &\text{ if } t \in \left[0, \dfrac{T}{2}-1\right] \\[3mm]
	\widehat{u}_{3}(t) &\text{ if } t \in \left[\dfrac{T}{2}-1, \dfrac{T}{2}-1+\tau_{3}\right] \\[3mm]
	\bar{u} &\text{ if } t \in \left[\dfrac{T}{2}-1+\tau_{3}, \dfrac{T}{2}+1-\tau_{4}\right] \\[3mm]
	\widehat{u}_{4}(T-t) &\text{ if } t \in \left[\dfrac{T}{2}+1-\tau_{4}, \dfrac{T}{2}+1\right] \\[3mm]
	\widehat{u}_{\infty b}(T-t) &\text{ if } t \in \left[\dfrac{T}{2}+1, T\right]	
	\end{array}
	\right.
\end{equation*}
\begin{figure}
\begin{center}
\includegraphics[width=1.0\linewidth]{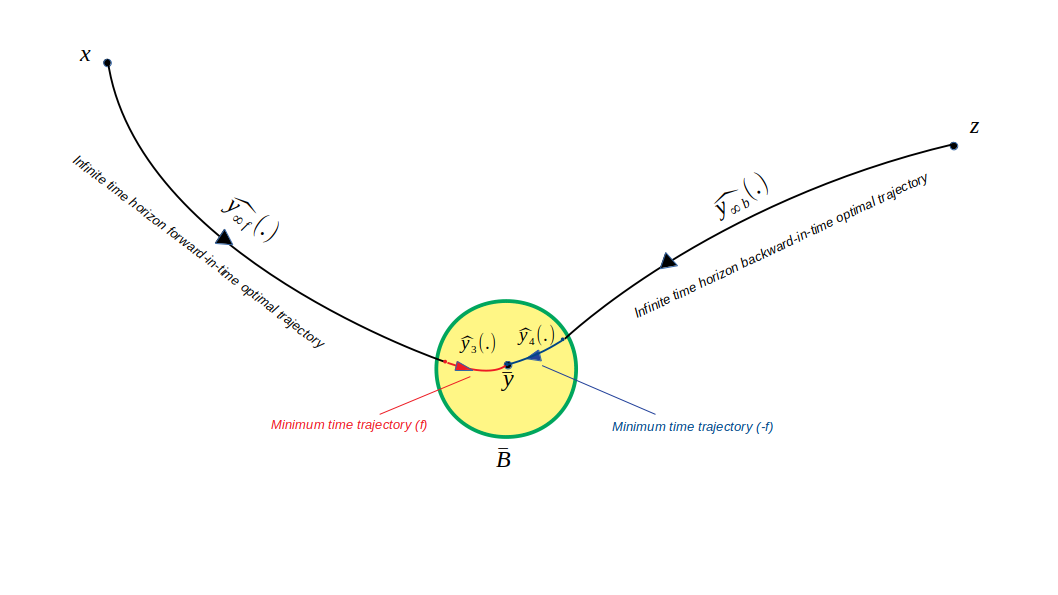}
\caption{Construction of an admissible trajectory for $\left(\mathcal{P}_{[0,T]}^{x,z}\right)$}
\label{fig:traj_opt}
\end{center}	
\end{figure}
\FloatBarrier
We have then the upper bound for the optimal cost:
\begin{equation} 
C_{T}(\widehat{u}_{T},x,z) \leqslant \int_{0}^{T} w(\underline{y}_{T}(t),\underline{u}_{T}(t)) \, dt 
\leqslant A+B+C+D+E
\label{CTmaj1}
\end{equation}
where
\vspace{0.15cm}

\noindent $A:=\displaystyle{\int_{0}^{\frac{T}{2}-1}} w\left(\widehat{y}_{\infty f}(t),\widehat{u}_{\infty f}(t)\right) \, dt$ \\
\vspace{0.15cm}

\noindent $E:=\displaystyle{\int_{\frac{T}{2}+1}^{T}} w\left(\widehat{y}_{\infty b}(T-t),\widehat{u}_{\infty b}(T-t)\right) \, dt=\displaystyle{\int_{0}^{\frac{T}{2}-1}} w(\widehat{y}_{\infty b}(t),\widehat{u}_{\infty b}(t)) \, dt$ \\
\vspace{0.15cm}

\noindent $B:=\displaystyle{\int_{\frac{T}{2}-1}^{\frac{T}{2}-1+\tau_{3}}} w(\underbrace {\widehat{y}_{3}(t),\widehat{u}_{3}(t)}_{\in \Omega_{r}}) \, dt \leqslant \tau_{3}.M_{r} \leqslant \dfrac{\epsilon}{2}$
\vspace{0.15cm}

\noindent $C:=\displaystyle{\int_{\frac{T}{2}-1+\tau_{3}}^{\frac{T}{2}+1-\tau_{4}}}\underbrace {w\left(\bar{y},\bar{u}\right)}_{0} dt=0$
\vspace{0.15cm}

\noindent $D:=\displaystyle{\int_{\frac{T}{2}+1-\tau_{4}}^{\frac{T}{2}+1}} w\left(\widehat{y}_{4}(T-t),\widehat{u}_{4}(T-t)\right) \, dt=\displaystyle{\int_{\frac{T}{2}-1}^{\frac{T}{2}-1+\tau_{4}}} w(\underbrace {\widehat{y}_{4}(t),\widehat{u}_{4}(t)}_{\in \Omega_{r}}) \, dt  \leqslant \tau_{4}.M_{r} \leqslant \dfrac{\epsilon}{2} $
\vspace{0.15cm}

\noindent Finally, we obtain
\begin{equation} 
\begin{aligned} 
C_{T}(\widehat{u}_{T},x,z) &\leqslant \displaystyle{\int_{0}^{\frac{T}{2}-1}} w\left(\widehat{y}_{\infty f}(t),\widehat{u}_{\infty f}(t)\right) \, dt+\displaystyle{\int_{0}^{\frac{T}{2}-1}} w(\widehat{y}_{\infty b}(t),\widehat{u}_{\infty b}(t)) \, dt+\epsilon
\end{aligned}
\label{CTmaj2}
\end{equation}
Noting that the integrals $A$ and $E$ converge, we take the limit superior as $T \to +\infty$ of the above inequality and we obtain:
\begin{equation} 
\limsup_{T \to +\infty}\hspace{0.1cm}  C_{T}(\widehat{u}_{T},x,z) \leqslant v_{f}(x)+v_{b}(z).
\label{CTmaj3}
\end{equation}
\vspace{0.2cm}
\textcolor{blue}{$\triangleright$ \textbf{Conclusion}}: By combining \eqref{ineqthm1} and \eqref{CTmaj3}, we obtain the required result:
\begin{equation} 
\lim_{T \to +\infty} C_{T}(\widehat{u}_{T},x,z)=v_{f}(x)+v_{b}(z).
\label{CT}
\end{equation}
\end{proof}
\begin{remark} \label{valueexplq} In the LQ case, a two-term large-time expansion of the value function has been derived in \cite{19} solely under the Kalman condition. Assumptions such as the strict dissipativity property and the existence/uniqueness of the solution of the static optimization problem are automaticaly satisfied in this case. For the proof, see \cite{19}.
\end{remark}

\section{Conclusions} Considering a general nonlinear dissipative optimal control problem in finite dimension and fixed time $T$, we have established a two-term asymptotic expansion of the value function as $T\rightarrow+\infty$. Essentially based on the strict dissipativity property, the result is a generalization of the expansion established in the variants of the linear quadratic case treated in \cite{18} and \cite{19}. We highlight here that a generalization of \cite{19} to the infinite dimensional case is currently being finalized. The infinite dimensional nonlinear case remains open.

%%%%%%%%%%%%%%%%%%%%%%%%%%%%%%%%%%%%%%%%%%%%%%%%%%%%%%
%          7. REFERENCES SECTION
%%%%%%%%%%%%%%%%%%%%%%%%%%%%%%%%%%%%%%%%%%%%%%%%%%%%%%

%       READ THIS SECTION CAREFULLY

% Each of the references below MUST be cited in your article above. Do not include references that are not cited in your article.

% Follow the examples below carefully. We strongly suggest that you copy and paste your reference information directly into our examples.

% List all references in alphabetical order according to the first author's last name.

% Verify each URL works correctly and can be accessed properly. Your URL links should be to reputable websites. The command line for a website link begins with: \url{ }

% Do not add MR or DOI numbers to your references. AIMS production staff will add this information.

% Using BibTex is not recommended but can be handled.

\end{document}